\DeclareMathOperator\id{Id}
\DeclareMathOperator{\dive}{div}
\DeclareMathOperator{\A}{A}
\DeclareMathOperator{\tr}{tr}
\newcommand{\R}{\ensuremath{\mathbb{R}}}
\newcommand{\Rn}{\ensuremath{{\mathbb{R}^n}}}
\newcommand{\Ha}{\ensuremath{\mathcal{H}}}
\def\cringle{\mathaccent"7017 }
\newcommand{\Atf}{\ensuremath{\cringle{\A}}}
\newcommand{\ar}{ar}
\newcommand{\vol}{vol}
\def\R{\mathbb R}
\def\W{\mathcal{W}}
\def\I{\mathcal{I}}
\def\M{\mathcal{M}}
\theoremstyle{note}
\newtheorem{theorem}{Theorem}
\theoremstyle{definition}
\newtheorem{remark}{Remark}
\begin{document}
\title[]{Control of the isoperimetric deficit by the Willmore deficit}
\author{Matthias R{\"o}ger}
\address{Matthias R{\"o}ger, Technische Universit\"at Dortmund,
Fakult\"at für Mathematik,
Vogelpothsweg 87,
D-44227 Dortmund}
\email{matthias.roeger@tu-dortmund.de}

\author{Reiner Sch{\"a}tzle}
\address{Reiner Sch{\"a}tzle, Eberhard-Karls-Universit\"at T\"ubingen,
Mathematisches Institut,
Auf der Morgenstelle 10,
D-72076 T\"ubingen}
\email{schaetz@everest.mathematik.uni-tuebingen.de}

\subjclass[2000]{53A05,53C42,52A40}

\keywords{Willmore functional, isoperimetric inequality}

\date{\today}

\begin{abstract}
In the class of smoothly embedded surfaces of sphere type we prove that the isoperimetric deficit can be controlled by the Willmore deficit.
 \end{abstract}

\maketitle
\label{sec:intro}
Let us consider the class $\M$ of smoothly embedded surfaces $\Sigma\subset\R^3$ of sphere type with enclosed inner region $\Omega_\Sigma\subset\R^3$, and let us denote by  $\ar(\Sigma)$ and $\vol(\Omega_\Sigma)$ the two-dimensional Hausdorff measure of $\Sigma$ and the three-dimensional Lebesgue measure of $\Omega_\Sigma$, respectively. For $\Sigma\in\M$ we define the isoperimetric ratio
\begin{gather}
	\I(\Sigma)\,:=\, \frac{\ar(\Sigma)}{\vol(\Omega_\Sigma)^{\frac{2}{3}}}, \label{eq:isop}
\end{gather}
and the Willmore energy
\begin{gather}
	\W(\Sigma)\,:=\, \frac{1}{4}\int_\Sigma |H|^2\,d\Ha^2, \label{eq:willm}
\end{gather}
where $H$ denotes the mean curvature of the surface $\Sigma$. Both functionals are invariant under dilations and translations; round spheres are in both cases the unique minimizers, in particular
\begin{alignat*}{2}
	\I(\Sigma)\,&\geq\, \I(S^2)\,&&=\, (6\sqrt{\pi})^{\frac{2}{3}},\\
	\W(\Sigma)\,&\geq\, \W(S^2)\,&&=\, 4\pi
\end{alignat*}
for all $\Sigma\in\M$. We consider for any such $\Sigma$ and both functionals the corresponding \emph{deficits}, that is the difference from the optimal value. Our main result is the following control of the isoperimetric deficit by the Willmore deficit.
\begin{theorem}\label{thm:main}
For all $c_0>0$ there exists a universal constant $C>0$ such that
\begin{gather}
	\I(\Sigma) - \I(S^2) \,\leq\, C\Big(\W(\Sigma)-\W(S^2)\Big) \label{eq:control}
\end{gather}
for all $\Sigma\in\M$ with $\I(\Sigma)-\I(S^2)\leq c_0$.
\end{theorem}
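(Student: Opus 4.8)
The plan is to dichotomise on the size of the Willmore deficit. Fix $\delta_0>0$, to be chosen below, independent of $c_0$ and $\Sigma$. If $\W(\Sigma)-\W(S^2)\geq\delta_0$, then the standing hypothesis $\I(\Sigma)-\I(S^2)\leq c_0$ immediately gives \eqref{eq:control} with $C=c_0/\delta_0$. Thus the content lies entirely in the regime $\W(\Sigma)-\W(S^2)<\delta_0$, where the isoperimetric smallness is not used at all.

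In that regime I would start from the Gauss--Bonnet identity, which for $\Sigma\in\M$ of sphere type reads $\W(\Sigma)-4\pi=\tfrac12\int_\Sigma|\Atf|^2\,d\Ha^2$, $\Atf$ denoting the trace-free part of the second fundamental form. So $\W(\Sigma)-4\pi<\delta_0$ says $\Sigma$ is nearly umbilic, and the optimal rigidity estimate of De~Lellis--M\"uller applies: after a dilation (normalising $\vol(\Omega_\Sigma)=\tfrac{4\pi}{3}$) and a translation, $\Sigma$ is a normal graph over the unit sphere, $\Sigma=\{(1+f(\omega))\,\omega:\omega\in S^2\}$, with $\|f\|_{W^{2,2}(S^2)}\leq\varepsilon_0$, where $\varepsilon_0\to0$ as $\delta_0\to0$. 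A further infinitesimal translation lets me take the degree-one spherical-harmonic part of $f$ to vanish, and the volume normalisation then forces the degree-zero part $f_0$ to satisfy $|f_0|\leq C\|f\|_{L^2(S^2)}^2$.

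Next I would Taylor-expand both functionals at $f\equiv0$. For the isoperimetric ratio a direct computation gives
\[
	\I(\Sigma)-\I(S^2)=\Big(\tfrac{3}{4\pi}\Big)^{2/3}\Big(\tfrac12\!\int_{S^2}\!|\nabla f|^2-\!\int_{S^2}\!f^2+\tfrac1{4\pi}\Big(\!\int_{S^2}\!f\Big)^{2}\Big)+\mathcal E_1 .
\]
For the Willmore energy I would combine the Gauss--Bonnet identity with the fact that the linearisation of $\Atf$ at the round sphere is, up to sign, the trace-free Hessian $(\nabla^2f)^\circ$; together with $\int_{S^2}|(\nabla^2f)^\circ|^2=\tfrac12\int_{S^2}(\Delta f)^2-\int_{S^2}|\nabla f|^2$ (a consequence of the Bochner identity $\int_{S^2}|\nabla^2f|^2=\int_{S^2}(\Delta f)^2-\int_{S^2}|\nabla f|^2$ on the unit sphere) this yields
\[
	\W(\Sigma)-\W(S^2)=\tfrac14\!\int_{S^2}\!(\Delta f)^2-\tfrac12\!\int_{S^2}\!|\nabla f|^2+\mathcal E_2 .
\]
Expanding $f=\sum_l f_l$ in spherical harmonics, the quadratic parts become $(3/4\pi)^{2/3}\sum_{l\geq2}\big(\tfrac12l(l+1)-1\big)\|f_l\|_{L^2}^2$ and $\sum_{l\geq2}\tfrac14l(l+1)(l-1)(l+2)\|f_l\|_{L^2}^2$; the $l=0$ and $l=1$ modes drop out of both, as they must, being the directions generated by dilations and translations, under which $\I$ and $\W$ are invariant. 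The decisive algebraic fact is $l(l+1)-2=(l-1)(l+2)$: it makes the $l$-th coefficient of the isoperimetric form equal to $(3/4\pi)^{2/3}\tfrac{2}{l(l+1)}$ times the $l$-th coefficient of the Willmore form, and $\tfrac{2}{l(l+1)}\leq\tfrac13$ for $l\geq2$. Hence the isoperimetric quadratic form is at most $\tfrac13(3/4\pi)^{2/3}$ times the Willmore one. To finish I would absorb the cubic remainders $\mathcal E_1,\mathcal E_2$ using the two-dimensional embedding $W^{2,2}(S^2)\hookrightarrow C^0(S^2)$ and the Gagliardo--Nirenberg inequality $\|\nabla f\|_{L^4}^2\leq C\|f\|_{W^{2,2}}\|\nabla f\|_{L^2}$ (so that, e.g., $\int_{S^2}|\nabla f|^4\leq C\varepsilon_0^2\|\nabla f\|_{L^2}^2$ and $\big|\int_{S^2}f^3\big|\leq C\varepsilon_0\|f\|_{L^2}^2$), together with $|f_0|\leq C\|f\|_{L^2}^2$ and the Poincar\'e inequality $\|f_{\geq2}\|_{L^2}^2\leq\tfrac16\|\nabla f_{\geq2}\|_{L^2}^2$ to discard the $f_0$ contributions; this gives $|\mathcal E_1|+|\mathcal E_2|\leq C\varepsilon_0\sum_{l\geq2}l(l+1)(l-1)(l+2)\|f_l\|_{L^2}^2$, a small multiple of the Willmore quadratic form. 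Choosing $\delta_0$ small enough then yields \eqref{eq:control} with $C$ slightly above $\tfrac13(3/4\pi)^{2/3}$, and together with the first case $C=\max\{c_0/\delta_0,\,(3/4\pi)^{2/3}\}$ works.

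The main obstacles I anticipate are technical rather than conceptual: (i) converting the De~Lellis--M\"uller rigidity statement --- closeness of a conformal parametrisation to that of a round sphere --- into the normal-graph representation above with the stated normalisations; and (ii) the bookkeeping of the cubic remainders $\mathcal E_1,\mathcal E_2$ at the borderline regularity $W^{2,2}$, where $\nabla f$ cannot be bounded in $L^\infty$ and one must interpolate. By contrast the comparison of the two second variations is short, and its backbone --- that dilations and translations lie in the kernel of both Hessians, while otherwise the ratio of the Hessians is $2/(l(l+1))$ --- is completely elementary.
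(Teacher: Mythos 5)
Your outer scaffolding (dichotomy on the size of the Willmore deficit, the trivial case via the hypothesis $\I(\Sigma)-\I(S^2)\leq c_0$, scaling/translation normalization, De Lellis--M\"uller in the nearly umbilical regime) matches the paper, and your second-variation algebra is correct: the Hessians of $\I$ and $\W$ at the round sphere diagonalize in spherical harmonics with coefficients $(3/4\pi)^{2/3}\tfrac12(l-1)(l+2)$ and $\tfrac14 l(l+1)(l-1)(l+2)$, so the modewise ratio is $2/(l(l+1))\leq\tfrac13$ for $l\geq 2$. But the route through a normal graph plus Taylor expansion has two genuine gaps, and the second is more than bookkeeping. (i) The estimate $\|\psi-\id\|_{W^{2,2}(S^2)}\leq C\|\Atf\|_{L^2}$ controls neither $\nabla\psi$ nor the normal in $L^\infty$ (in two dimensions $W^{2,2}\not\hookrightarrow C^1$ and $W^{1,2}\not\hookrightarrow L^\infty$), so it does not by itself give star-shapedness, let alone the representation $\Sigma=\{(1+f(\omega))\omega\}$ with $\|f\|_{W^{2,2}}\leq\varepsilon_0$; extra input (e.g.\ a uniform estimate for the normal) and a separate transfer of the $W^{2,2}$ bound from $\psi$ to $f$ would be needed. (ii) More seriously, the claimed bound $|\mathcal E_2|\leq C\varepsilon_0\sum_{l\geq2}l(l+1)(l-1)(l+2)\|f_l\|_{L^2}^2$ is not reachable with the tools you list. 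Expanding $\W(\Sigma)-4\pi=\tfrac12\int_\Sigma|\Atf|^2\,d\Ha^2$ for a graph produces terms quadratic in $\nabla^2 f$ whose coefficients are rational functions of $\nabla f$, deviating from their values at $f\equiv 0$ by quantities of order $|\nabla f|^2/(1+|\nabla f|^2)$; a typical remainder term is comparable to $\int_{S^2}|\nabla f|^2|\nabla^2 f|^2$. Making this a small multiple of $\|\nabla^2 f\|_{L^2}^2$ requires pointwise smallness of $\nabla f$, which $\|f\|_{W^{2,2}}\leq\varepsilon_0$ does not give; the embedding $W^{2,2}(S^2)\hookrightarrow C^0$ and $\|\nabla f\|_{L^4}^2\leq C\|f\|_{W^{2,2}}\|\nabla f\|_{L^2}$ handle $\int f^3$, $\int f|\nabla f|^2$, $\int|\nabla f|^4$, but not products of $|\nabla^2 f|^2$ with $|\nabla f|^2$, since $\nabla^2 f$ is only in $L^2$. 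So choosing $\delta_0$ small does not absorb $\mathcal E_2$; at this borderline regularity the expansion itself is the obstruction, not its bookkeeping.

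The paper's proof is arranged precisely so that neither issue arises. It never expands $\W$: by Gauss--Bonnet the right-hand side of \eqref{eq:control} is exactly $\tfrac12\int_\Sigma|\Atf|^2\,d\Ha^2$, and instead of comparing Hessians it normalizes $\ar(\Sigma)=4\pi$ and writes the exact identity $3\big(\vol(B)-\vol(\Omega_\Sigma)\big)=\int_{S^2}\big(1-\psi\cdot\nu\big)g\,d\Ha^2$ in the conformal parametrization; the algebraic rewriting of $1-\psi\cdot\nu$ together with the tangential divergence theorem for $\int\tfrac12\psi\cdot H_\psi g$ bounds the volume deficit by $\int_{S^2}|\psi-\nu|^2 g\,d\Ha^2+\tfrac12\big(\W(\Sigma)-4\pi\big)$, hence by $C\|\Atf\|_{L^2}^2$ using only the $L^2$-closeness of $\psi$ and of the normal to the identity, which is exactly what De Lellis--M\"uller provide; the elementary inequality $a^{-2/3}-b^{-2/3}\leq\tfrac23 a^{-5/3}(b-a)$ then converts the volume deficit into the isoperimetric deficit. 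If you want to push your scheme through, you would need either improved integrability of $\nabla^2 f$ in the nearly umbilical regime or a restructuring of this kind that avoids expanding the curvature functional.
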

Umbilical surfaces $\Sigma\in \M$ are by a classical Theorem of Codazzi round spheres. Theorem \ref{thm:main} can be also been seen as a quantitative version of this statement: An equivalent formulation of \eqref{eq:control} is that
\begin{gather*}
	\I(\Sigma) - \I(S^2) \,\leq\, C\frac{1}{4}\int_\Sigma (\kappa_1-\kappa_2)^2\,d\Ha^2,
\end{gather*}
where $\kappa_1,\kappa_2$ denote the prinicipal curvatures of $\Sigma$. This equivalence follows directly from \eqref{eq:atf} below.\\
We include some comments on the optimality of \eqref{eq:control}. By an explicit example we show below that the linear growth in the right-hand of \eqref{eq:control} with respect to the Willmore deficit is optimal. Next we observe that  \eqref{eq:control} cannot hold for arbitrary $\Sigma\in\M$: In recent work by Schygulla \cite{Schy11} it is shown that for arbitrarily prescribed isoperimetric ratio $(6\sqrt{\pi})^{\frac{2}{3}}\leq\sigma<\infty$ the minimial Willmore energy in the class  $\M_\sigma:=\{\Sigma\in\M\,:\, \I(\Sigma)=\sigma\}$ is attained and is strictly below $8\pi$. Finally, the isoperimetric deficit is not estimated from below by the Willmore deficit, since the Willmore functional can be made arbitrarily large by variations of the sphere that are small in $C^1$ but large in $C^2$, whereas the isoperimetric ratio does not change substantially.

Estimates from below for the isoperimetric deficit have however been proved  previously in terms of more direct concepts of distance from balls. Bernstein \cite{Bern05} and Bonnesen \cite{Bonn24} already considered in two space dimensions the \emph{asymmetry index}
\begin{gather*}
	A(\Omega)\,:=\,\inf\big\{\Omega\Delta B(x_0,r)\,:\, x_0\in \R^n, r>0, \vol(B(x_0,r))=\vol(\Omega)\big\}
\end{gather*}
and proved a lower bound for the isoperimetric deficit in terms of this quantity. Figalli, Maggi, and Pratelli \cite{FiMP10} generalized and sharpened their results and proved the existence of a constant $C=C(n)$ such that
\begin{gather*}
	A(\Omega)\,\leq\, C \big(\I(\partial\Omega)-\I(S^{n-1})\big)^{\frac{1}{2}}
\end{gather*}
for every measureable set $\Omega\subset\Rn$ with $0<\vol(\Omega)<\infty$. As the left-hand side is not sensitive with respect to variations that are small in $C^0$ but large in $C^1$ whereas the right-hand side is, an opposite inequality of this type cannot be true. In this sense the optimality of the ball with respect to the isoperimetric ratio is in between the corresponding optimality properties with respect to the  asymmetry index and with respect to the Willmore energy.

Before we start with the proof of Theorem \ref{thm:main} we fix some notations and collect some results that we will use below. 
Let $\Sigma\subset\R^3$ denote a smoothly embedded hypersurface in $\R^3$ of sphere type. We denote by $A$ the second fundamental form of $\Sigma$ and by $\Atf$ the trace-free part of the second fundamental form,
\begin{gather*}
	\Atf(x)\,=\, A(x)-\frac{\tr{A(x)}}{2} \id\,=\, A(x)-\frac{1}{2}H(x)\id\qquad\text{ for }x\in\Sigma.
\end{gather*}
If we denote by $\kappa_1(x)$ and $\kappa_2(x)$ the principal curvatures of $A(x)$ we obtain
\begin{gather*}
	2|\Atf|^2\,=\, \kappa_1^2 +\kappa_2^2 -2\kappa_1\kappa_2 \,=\, H^2 -4 K,
\end{gather*}
where $K$ denotes the Gauss curvature of $\Sigma$. The Gauss--Bonnet Theorem then implies that
\begin{gather}
	\W(\Sigma)-\W(S^2)\,=\, \frac{1}{4}\int_\Sigma H^2 \,d\Ha^2 -4\pi \,=\, \frac{1}{2}\int_\Sigma |\Atf|^2\,d\Ha^2. \label{eq:atf}
\end{gather}

\begin{proof}[Proof of Theorem \ref{thm:main}]
In a first step we show the existence of a universal constant $C>0$ such that for all $\Sigma$ with $\W(\Sigma)\leq 6\pi$ and $\ar(\Sigma)=4\pi$
\begin{gather}
	\Big(\vol(B)-\vol(\Omega_\Sigma)\Big) \,\leq\, C\big(\W(\Sigma)-\W(S^2)\big), \label{eq:est-vol}
\end{gather}
where $B\subset\R^3$ denotes the unit ball. To prove this inequality we first apply a theorem by de Lellis and Müller \cite{DeMu05}: possibly after a suitable translation of $\Sigma$, there exists a conformal parametrization $\psi:S^2\to\Sigma$ such that
\begin{gather}
	\|\psi -\id\|_{W^{2,2}(S^2)}\,\leq\, C\|\Atf\|_{L^2(\Sigma)}. \label{eq:LeMu}
\end{gather}
We denote by $g$ the conformal factor of the induced area element, that is $\psi_\sharp \big(\Ha^2\lfloor \Sigma\big)=g\Ha^2\lfloor S^2$ and set $H_\psi(x)=H_\Sigma(\psi(x))$ for $x\in S^2$. From $\ar(\Sigma)=4\pi$ we obtain that
\begin{gather}
	\int_{S^2} g(x)\,d\Ha^2(x) \,=\, \int_\Sigma 1\,d\Ha^2\,=\, \int_{S^2} 1\,d\Ha^2.  \label{eq:ar4pi}
\end{gather}
For the volume of $\Sigma$ we deduce from the Divergence Theorem that
\begin{gather}
	3\vol(\Omega_\Sigma)\,=\, \int_{\Omega_\Sigma}  \dive x\,dx\,=\, \int_\Sigma x\cdot \nu(x)\,d\Ha^2(x),
\end{gather}
where $\nu$ denotes the outer normal of $\Omega_\Sigma$. By rewriting the last integral in terms of the parametrization $\psi$ we obtain
\begin{align}
	3\Big(\vol(B)-\vol(\Omega_\Sigma)\Big)\,&=\, \int_{S^2} \Big(1-\psi(x)\cdot \nu(x)g(x)\Big)\,d\Ha^2(x) \notag\\
	&=\, \int_{S^2} \Big(1-\psi(x)\cdot \nu(x)\Big)g(x)\,d\Ha^2(x), \label{eq:pf1}
\end{align}
where we have used \eqref{eq:ar4pi}.
We further compute that
\begin{align*}
	1-\psi\cdot\nu \,=\, &\frac{1}{2}|\psi-\nu|^2 - \frac{1}{2}(|\psi|^2-1) \\
	=\, &\frac{1}{2}|\psi-\nu|^2-\frac{1}{2}|\psi+\frac{1}{2}H_\psi|^2 +\frac{1}{2}\psi\cdot H_\psi +\frac{1}{8}H_\psi^2 +\frac{1}{2}.
\end{align*}
Using that
\begin{align*}
	\int_{S^2} \frac{1}{2}\psi\cdot H_\psi g\,d\Ha^2\,
	=\, &\int_\Sigma \frac{1}{2} x\cdot H_\Sigma(x)\,d\Ha^2(x)\\
	=\,& -\int_\Sigma \frac{1}{2}\text{div}_{\text{tan}}x\,d\Ha^2(x)
	\,=\, -\ar(\Sigma)\,=\, -\int_{S^2} \Big(\frac{1}{2}+\frac{1}{2}g\Big)\,d\Ha^2
\end{align*}
we therefore deduce from \eqref{eq:pf1} that
\begin{align}
	&3\Big(\vol(B)-\vol(\Omega_\Sigma)\Big)\notag\\
	=\, &\int_{S^2} \Big(\frac{1}{2}|\psi-\nu|^2g-\frac{1}{2}|\psi+\frac{1}{2}H_\psi|^2g +\frac{1}{2}\psi\cdot H_\psi g +\frac{1}{8}H_\psi^2g +\frac{1}{2}g\Big) \,d\Ha^2 \notag\\
	=\, & \int_{S^2} \Big(\frac{1}{2}|\psi-\nu|^2g-\frac{1}{2}|\psi+\frac{1}{2}H_\psi|^2g +\frac{1}{8}H_\psi^2g -\frac{1}{2}\Big)\,d\Ha^2 \notag\\
	\leq\, & \int_{S^2} \frac{1}{2}|\psi-\nu|^2g \,\,d\Ha^2+\frac{1}{2}\Big(\W(\Sigma)-\W(S^2)\Big) \notag\\
	\leq\, & \int_{S^2} \Big(|\psi(x)-x|^2 + |x-\nu(x)|^2 \Big)g(x)\,d\Ha^2(x)+\frac{1}{2}\Big(\W(\Sigma)-\W(S^2)\Big)\notag\\
	\leq\, & C\|\Atf\|_{L^2(\Sigma)}^2 +\frac{1}{2}\Big(\W(\Sigma)-\W(S^2)\Big)
\end{align}
by \eqref{eq:LeMu}. With \eqref{eq:atf}  inequality \eqref{eq:est-vol} follows.

We now choose $\delta:=\min\{\frac{2}{3C},2\}$ with $C>0$ from \eqref{eq:est-vol}. We then have for all $\Sigma\in\M$ with $\ar(\Sigma)=4\pi$ and $\W(\Sigma)< (4+\delta)\pi$ that $\vol(\Omega_\Sigma)>\frac{2\pi}{3}$ holds. Since $\vol(\Omega_\Sigma)\leq \vol(B)\,=\, \frac{4\pi}{3}$ and since $a^{-2/3}-b^{-2/3}\leq \frac{2}{3}a^{-5/3}(b-a)$ for all $0<a<b$ we deduce that
\begin{align}
	\I(\Sigma)-\I(S^2)\,&=\, \frac{4\pi}{\vol(\Omega_\Sigma)^{\frac{2}{3}}}-\frac{4\pi}{\vol(B)^{\frac{2}{3}}}\,\leq\, \frac{8\pi}{3}\Big(\frac{2\pi}{3}\Big)^{-\frac{5}{3}}\Big(\vol(B)-\vol(\Omega_\Sigma)\Big) \notag\\
	&\leq\, C_1\Big(\W(\Sigma)-\W(S^2)\Big) \label{eq:control1}
\end{align}
for all $\Sigma$ with $\W(\Sigma)<(4+\delta)\pi$ and $\ar(\Sigma)=4\pi$ by inequality \eqref{eq:est-vol}. Since both sides of \eqref{eq:control1} are invariant under dilations this proves
\begin{align}
	\I(\Sigma)-\I(S^2)\,&\leq\, C_1\Big(\W(\Sigma)-\W(S^2)\Big) \label{eq:control2}
\end{align}
for all $\Sigma\in \M$ with $\W(\Sigma)<(4+\delta)\pi$. On the other hand, for all $\Sigma\in\M$ with $\W(\Sigma)\geq (4+\delta)\pi$ and $\I(\Sigma)-\I(S^2)\leq c_0$ we have
\begin{gather*}
	\I(\Sigma)-\I(S^2) \,\leq\,  \frac{c_0}{\delta\pi} \Big(\W(\Sigma)-\W(S^2)\Big).
\end{gather*}
Setting $C=\max\{C_1, \frac{c_0}{\delta\pi}\}$ this proves together with \eqref{eq:control2} that \eqref{eq:control} holds for all $\Sigma\in\M$ with $\I(\Sigma)-\I(S^2)\leq c_0$.
\end{proof}
\begin{remark}
The optimality of \eqref{eq:control} with respect to the (linear) growth rate in  the right-hand side can be easily seen by evaluating a specific perturbation of the unit sphere. Consider for example the ellipsoids
\begin{gather*}
	E(r)\,=\, \Big\{ \begin{pmatrix}
					x\cos\varphi \\ x\sin\varphi\\ r f(x)
				\end{pmatrix}
				\,:\, \varphi\in [0,2\pi),\, x\in [0,1]\Big\}
\end{gather*}
for $f(x)=\sqrt{1-x^2}$. We then have $E(1)=S^2$. A direct computation shows that with $q(x):= \sqrt{1+r^2(f'(x))^2}$ for $r>1$
\begin{align*}
	\int_{E(r)} H^2 \,d\Ha^2\,&=\, 4\pi \int_{0}^1 xq(x)r^2\Big(\frac{f''(x)}{q(x)^3}+\frac{f'(x)}{xq(x)}\Big)^2\,dx\\
	&=\, 4\pi\Big(\frac{7r^2+2}{3r^2}+\frac{r^2}{\sqrt{r^2-1}}\Big(\frac{\pi}{2}-\arctan\frac{1}{\sqrt{r^2-1}}\Big)\Big).
\end{align*}
For the corresponding volume of the region $\Omega(r)$ enclosed by $E(r)$ we obtain that
\begin{gather*}
	\vol(\Omega(r))\,=\, \frac{4\pi}{3} r
\end{gather*}
and for the area and isoperimetric ratio
\begin{align*}
	\ar(E(r))\,&=\, 4\pi\int_0^1 xq(x)\,dx\,=\, 2\pi \Big(1+\frac{r^2}{\sqrt{r^2-1}}\arcsin\Big(\frac{\sqrt{r^2-1}}{r}\Big)\Big)\\
	\I(E(r))\,&=\, (6\sqrt{\pi})^{\frac{2}{3}}\frac{1}{2}\Big(r^{-\frac{2}{3}}+\frac{r^\frac{4}{3}}{\sqrt{r^2-1}}\arcsin\Big(\frac{\sqrt{r^2-1}}{r}\Big)\Big).
\end{align*}
Using that $\frac{\pi}{2}-\arctan\frac{1}{\sqrt{r^2-1}}=\arcsin\big(\frac{\sqrt{r^2-1}}{r}\big)$ a few direct computations yield that
\begin{gather*}
	\lim_{r\searrow 1} \frac{\W(E(r))}{\I(E(r))}\,=\, 6\Big(\frac{16\pi}{3}\Big)^\frac{2}{3}.
\end{gather*}
\end{remark}

\end{document}